   \newtheorem{theorem}{Theorem}[section]
   \newtheorem{corollary}[theorem]{Corollary}
   \newtheorem{definition}[theorem]{Definition}
\journal{Dynamical Systems}
\begin{document}

\begin{frontmatter}



\title{{Positive Invariance Condition for Continuous Dynamical Systems Based on Nagumo Theorem}
}
\author{Yunfei Song}

\address{Quant Strategies Group, BA Securities\\
\large\emph{December 1, 2020}}

\begin{abstract}
In this paper,  we obtain sufficient and necessary conditions of some classical convex sets
as positively invariant sets for a continuous dynamical system, namely positive invariance conditions. The approach is based on Nagumo Theorem by deriving the tangent cones of these sets. We also propose approaches using optimization theory and models to
verify the existence of these sufficient and necessary conditions.
\end{abstract}

\begin{keyword}

Dynamical System, Invariant Set, Polyhedral Set, Ellipsoid,
Lorenz Cone.
\end{keyword}

\end{frontmatter}

\section{Introduction}

Dynamical system has a wide range of applications in the real world. Positively invariant set is a key concept in dynamical system.  A positively invariant set of a dynamical system is described as when the system emits from the set, it will always stay in the set. Invariant set is intuitively considered as the attracting region, also namely safety area, of the dynamical system. The applications of positively invariant set refer to \cite{Blanchini2, Blanchini, boyd}. Given a set and a dynamical system, to verify if the set is an invariant set for the system is a key problem in this area. Such verification criteria are usually used to construct the maximal invariant set, i.e., the maximal safety, in a controlled dynamical system.  

Recently, some excellent surveys on the theoretical results and applications of invariant sets are published, e.g., \cite{Blanchini, songthesis}. For specific classical sets, the invariance condition, i.e., the sufficient and necessary condition such that a given set is an invariant set for a given dynamical system, are widely studied. For polyhedral sets,  one may refer to \cite{caste, dorea, song1, valch} for various invariance conditions for linear continuous and discrete dynamical system.  For quadratic type of sets, e.g., ellipsoidal and second order conic sets, one may refer to \cite{boyd, loewy, stern,Vander}. For general convex set and nonlinear system, one may refer to a novel unified approach to derive invariance
conditions for polyhedra, ellipsoids, and cones is presented in \cite{song1}.  The connection between discrete and continuous dynamical systems for preserving the invariance of a set is studied, e.g., \cite{song3, song2}. Construction of some invariant sets for a given system is also an interesting topic in this area, e.g., \cite{horva, songdikin}.

In this paper,  we derive the sufficient and necessary conditions of some classical convex sets
as positively invariant sets for a continuous dynamical system. These conditions are referred to as positive invariance condition for simplicity.  The candidates of the sets are polyhedra, ellipsoids and cones. The approach is primarily based on Nagumo Theorem \cite {Blanchini, nagu}, which yields the positive invariance conditions to be deriving the tangent cones of these sets. We also propose approaches using optimization theory and models to
verify the existence of these positive invariance conditions. The novelty of this paper is that we applied the theoretical result Nagumo Theorem into specific sets and dynamical system, as well as deriving the new positive invariance conditions or same invariance condition by using this new method. Also, the technique using optimization theory and algorithm is novel and the link between invariant set and optimization is built up. 

\emph{Notation and Conventions.} In this paper, we use the following
notation and conventions to avoid unnecessary repetitions, e.g.,  \cite {bellm, horn}
\begin{itemize}
  \item The inertia of a matrix is denoted by inertia$\{Q\}=\{a,b,c\}$ that indicates the number
of positive, zero, and negative eigenvalues of the matrix $Q$, respectively.
  \item The basis in $\mathbb{R}^n$ is denoted by $e^1=(1,0,...,0)^T, e^2=(0,1,...,0)^T,...,
  e^n=(0,0,...,1)^T$. And we let $e=(1,1,...,1)^T.$
  \item The nonnegative quadrant of $\mathbb{R}^n$ is denoted by
  $\mathbb{R}^n_+$, i.e., any coordinates of $x\in \mathbb{R}^n_+$
  is nonnegative.
  \item Let a vector $v\in \mathbb{R}^n$, we use
  $v({\uparrow}_{k}\alpha)$ to denote the $k$-th entry of $v$ is replaced by
  $\alpha$, i.e., $v({\uparrow}_{k}\alpha)=(v_1,...,v_{k-1},\alpha,v_{k+1},...,v_n)^T.$
\end{itemize}

The paper is structured as follows: Section 

\section{Fundamental  Definitions}
\subsection{Invariant Set and Nagumo Theorem}
We consider the continuous dynamical system, which is
also named \emph{initial value problem (IVP)}, as follows:
\begin{equation}\label{eqn:dy1}
\dot{x}(t)=f(t,x), ~~ t\geq 0
\end{equation}
where $x(t)\in \mathbb{R}^{n}$ are the state variables,  $t$ is the time variable,  and $f(t,x)$ is a real 
valued continuous function. For simplicity, we denote $(t,x)\in
\mathbb{R}_+\times\mathbb{R}^n$.\\

We now introduce the definition of positively invariant set of
a dynamical system. 

\begin{definition}
Let us assume $\mathcal{S}$ be a set in $\mathbb{R}^n$. The set
$\mathcal{S}$ is called a \textbf{positively invariant set} of the dynamical
system (\ref{eqn:dy1}) if $x(0)\in \mathcal{S}$ implies $x(t)\in
\mathcal{S}$ for all $t\geq0.$
\end{definition}

Positively invariant set is also named as forward invariant set. For simplicity, we use invariant set
to represent positively invariant set. In other words, an invariant set is a set that once the trajectory of the system enters the set, then it will never leave the set in the future. 
One example of an invariant set\footnote{The solution of the system is $x(t)=x(0)e^{At}$.  For any $x(0)$ in the span space, then $x(0)$ can be 
represented as $\sum_{i}^k \ell_i v_i$, where $\{v_i\}$ are the eigenvectors of $A$ and $\{\ell_i\}$ are some coefficients. Note that $e^{At}v_i=e^{\lambda_it}v_i$, where $\lambda_i$
is the eigenvalue corresponding to $v_i$, it is easy to derive the proof. } of the linear system $\dot{x}(t)=Ax(t)$, where $A$ is a real matrix, is the span space of all eigenvectors of the matrix $A$. 

A fundamental characterization of a close and convex set to be an invariant set for a
continuous system is proposed by Nagumo \cite{Blanchini, nagu}. 

\begin{theorem}\label{nagumo}
\emph{\textbf{Nagumo  \cite{Blanchini,nagu}:}} Let
$\mathcal{S}\subseteq\mathbb{R}^n$ be a closed convex set, and
assume that the system $\dot{x}(t)=f(t, x)$, where
$f:\mathbb{R}_+\times \mathbb{R}^n\rightarrow \mathbb{R}$ is a
continuous mapping, has a globally unique solution for every
initial point $x(t_0)\in \mathcal{S}$. Then $\mathcal{S}$ is an
invariant set for this system if and only if
\begin{equation}\label{cond3}
f(t,x)\in \mathcal{T}_\mathcal{S}(x),\text{  for all }  x\in
\partial \mathcal{S},
\end{equation}
where  $\mathcal{T}_\mathcal{S}(x)$ is the tangent cone of
$\mathcal{S}$ defined as follows:
\begin{equation}\label{tancon}
\mathcal{T}_\mathcal{S}(x)=\left\{y\in
\mathbb{R}^n\Big|\lim_{t\rightarrow0}\inf\frac{\|x+ty,\mathcal{S}\|}{t}=0\right\}.
\end{equation}
\end{theorem}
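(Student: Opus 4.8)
The plan is to prove the two implications of the biconditional separately, with the Euclidean distance function $V(x)=\operatorname{dist}(x,\mathcal{S})=\inf_{z\in\mathcal{S}}\|x-z\|$ as the central object. Because $\mathcal{S}$ is closed and convex, I can use three standard facts: $V$ is convex and $1$-Lipschitz; the metric projection $\pi(x)$ onto $\mathcal{S}$ is single-valued, and $V$ is continuously differentiable at every $x\notin\mathcal{S}$ with $\nabla V(x)=(x-\pi(x))/\|x-\pi(x)\|$; and the projection inequality $\langle x-\pi(x),\,z-\pi(x)\rangle\le 0$ for all $z\in\mathcal{S}$ says exactly that $x-\pi(x)$ is an outward normal to $\mathcal{S}$ at the boundary point $\pi(x)$. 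Consequently, reading the tangent cone $\mathcal{T}_{\mathcal{S}}$ as the polar of the normal cone, any $y\in\mathcal{T}_{\mathcal{S}}(\pi(x))$ satisfies $\langle y,\,x-\pi(x)\rangle\le 0$. Throughout I write $\phi(\cdot)$ for the (unique) solution with $\phi(t_0)\in\mathcal{S}$, which is $C^1$ since $f$ is continuous.

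For necessity, assume $\mathcal{S}$ is invariant and fix $x\in\partial\mathcal{S}$, taking the solution with $\phi(t_0)=x$. Invariance gives $\phi(t_0+h)\in\mathcal{S}$ for all small $h>0$, and the first-order expansion $\phi(t_0+h)=x+h\,f(t_0,x)+o(h)$ yields
\begin{equation}
\operatorname{dist}\big(x+h\,f(t_0,x),\mathcal{S}\big)\le\|x+h\,f(t_0,x)-\phi(t_0+h)\|=o(h),
\end{equation}
so the liminf in (\ref{tancon}) vanishes and $f(t_0,x)\in\mathcal{T}_{\mathcal{S}}(x)$. This is the routine direction.

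For sufficiency — the substantial direction — I assume the tangent condition holds on $\partial\mathcal{S}$ and argue by contradiction that no trajectory leaves $\mathcal{S}$. Set $V(t)=\operatorname{dist}(\phi(t),\mathcal{S})$, so $V(t_0)=0$ while $V(t_1)>0$ for some $t_1$. The crux is to bound the upper right Dini derivative $D^+V(t)$ wherever $V(t)>0$. Writing $p=\pi(\phi(t))\in\partial\mathcal{S}$, inserting $\phi(t+h)=\phi(t)+h\,f(t,\phi(t))+o(h)$, and applying the $1$-Lipschitz triangle inequality for $\operatorname{dist}(\cdot,\mathcal{S})$ twice, I obtain
\begin{equation}
V(t+h)\le\operatorname{dist}\big(p+h\,f(t,p),\mathcal{S}\big)+\|\phi(t)-p\|+h\,\|f(t,\phi(t))-f(t,p)\|+o(h).
\end{equation}
Here the first term is $o(h)$ by the tangent condition at $p$, the second equals $V(t)$, and the third is controlled by a modulus of continuity $\omega$ of $f$ via $\|f(t,\phi(t))-f(t,p)\|\le\omega(V(t))$. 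Cancelling $V(t)$ gives the differential inequality $D^+V(t)\le\omega(V(t))$ with $V(t_0)=0$, whence a comparison argument forces $V\equiv 0$, contradicting $V(t_1)>0$.

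The main obstacle lies precisely in closing this last step. Since $f$ is only assumed continuous rather than Lipschitz, I cannot invoke Gronwall's inequality directly; the comparison $D^+V\le\omega(V)$ yields $V\equiv0$ only because the globally-unique-solution hypothesis is what guarantees that the scalar equation $\dot u=\omega(u),\ u(t_0)=0$ has $u\equiv0$ as its sole solution (an Osgood-type uniqueness). A secondary technical point is to make the passage from $f$ along the trajectory to $f$ at the projection $p$ uniform in $t$ on the compact interval $[t_0,t_1]$, which is where continuity of $f$ together with compactness enters. Everything else — the regularity of $V$ and the polarity between tangent and normal cones — is standard convex analysis.
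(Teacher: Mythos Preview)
The paper does not actually prove Nagumo's theorem: it quotes the result from \cite{Blanchini,nagu} and follows it only with a one-paragraph geometric heuristic (``the trajectory points inside $\mathcal{S}$ since $f(t,x)$ is the derivative of the trajectory at $x$''). So there is no proof in the paper to compare against; your proposal is attempting far more than the paper does.

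On the merits of your sketch: the necessity direction is correct and standard. In the sufficiency direction, the displayed inequality for $V(t+h)$ is fine, and for a convex $\mathcal{S}$ the tangent-cone condition does give $\operatorname{dist}(p+h\,f(t,p),\mathcal{S})=o(h)$ (since $h\mapsto\operatorname{dist}(p+hy,\mathcal{S})$ is convex with value $0$ at $h=0$, so the $\liminf$ is a genuine limit). The real problem is the step you yourself flag as the main obstacle. You assert that the globally-unique-solution hypothesis for $\dot x=f(t,x)$ forces the scalar comparison problem $\dot u=\omega(u)$, $u(t_0)=0$ to have $u\equiv 0$ as its only solution. That implication is not valid: $\omega$ is merely a modulus of continuity of $f$ on a compact set, and uniqueness for the $n$-dimensional system places no Osgood-type constraint on $\omega$. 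For instance, $f$ can be $C^\infty$ (so solutions are certainly unique) while its modulus of continuity on a given compact set is just some continuous $\omega$ with $\omega(0)=0$ for which $\dot u=\omega(u)$ may well have nontrivial solutions; the two uniqueness questions are unrelated.

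The way the uniqueness hypothesis is actually used in standard proofs is different: one builds, for each $\varepsilon>0$, an approximate (e.g.\ Euler polygonal) solution that stays in $\mathcal{S}$ --- this is where the tangent condition and convexity do the work --- and then invokes uniqueness of the true solution to conclude that the limit of these approximate solutions, which lies in the closed set $\mathcal{S}$, coincides with $\phi$. Alternatively one perturbs $f$ slightly inward and passes to the limit. Either route uses uniqueness as a compactness/identification device for trajectories, not as an Osgood condition on a scalar ODE. Your plan would need to be rerouted through one of these arguments to close the gap.
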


Note that the condition that the set is closed and convex is critical in this theorem. 
In fact,  this theorem can be simply illustrated in a
geometrical way: for any trajectory that emits from
$\mathcal{S}$, one only needs  to consider the property of this
trajectory hits the boundary $\partial\mathcal{S}$. We can see that condition (\ref{cond3}) ensures  the
trajectory points inside $\mathcal{S}$ since $f(t,x)$
is the derivative of the trajectory at $x$, thus $x(t)$ will stay in $\mathcal{S}$. Also,
there is no requirement that the set $\mathcal{S}$ needs a
specific form such that the theorem holds, thus this theorem is a general result. 
In this paper, we will apply Nagumo theorem on specific types of sets to derive the sufficient and necessary 
conditions such that the set is an invariant set for the continuous system (\ref{eqn:dy1}).

\subsection{Convex Sets}\label{conset}
In this subsection, we introduce the concepts of a family of convex
sets which are considered as invariant sets for dynamical systems.
In particular, these convex sets are  polyhedra, polyhedral cones,
ellipsoids, and Lorenz cones. These types of sets are common and used in many areas. 

A \emph{polyhedron} has two ways to define. The first way is given as
the intersection of a finite number of half-spaces as follows:
\begin{equation}\label{poly1}
\mathcal{P}=R[G,b ]=\{x\in \mathbb{R}^n~|~Gx\leq b\},
\end{equation}
where $G\in \mathbb{R}^{m\times n} $ and $ b\in \mathbb{R}^m$. The second way is given as the convex combination of a finite number of points
and a conic combination of some vectors as follows:
\begin{equation}\label{poly2}
\mathcal{P}=\Big\{x\in \mathbb{R}^n~|~x=\sum_{i=1}^{\ell_1}\theta_i
x^i+\sum_{j=1}^{\ell_2}\hat{\theta}_j\hat{x}^j,~
\sum_{i=1}^{\ell_1}\theta_i=1, \theta_i\geq0, \hat{\theta}_j\geq
0\Big\},
\end{equation}
where $x^1,...,x^{\ell_1},\hat{x}^1,...,\hat{x}^{\ell_2}\in
\mathbb{R}^n$.  A special type of polyhedra when it is bounded, i.e., $\ell_2=0$ in (\ref{poly2}), is referred  to as
\emph{polytope}.

A \emph{polyhedral cone} with origin as its vertex can be considered
as a special class of polyhedra, therefore we define polyhedral cone
as follows
\begin{equation}\label{polycone1}
\mathcal{C_P}=R[G,0]=\{x\in \mathbb{R}^n~|~Gx\leq 0\},
\end{equation}
or equivalently
\begin{equation}\label{polycone2}
\mathcal{C_P}=\Big\{x\in
\mathbb{R}^n~|~x=\sum_{j=1}^{\ell}\hat{\theta}_j\hat{x}^j,~
\hat{\theta}_j\geq 0\Big\},
\end{equation}
where $G\in \mathbb{R}^{m\times n}$, and
$\hat{x}^1,...,\hat{x}^{\ell}\in \mathbb{R}^n$. In particular, the
positive quadrant in $\mathbb{R}^n$, i.e., all coordinates are
nonnegative, which therefore denoted by $\mathbb{R}^n_+$, is a
special polyhedral cone and highly interesting as it has tremendous
scientific and engineering applications.

Since an arbitrary ellipsoid is equivalent to an ellipsoid with origin
as its center by a shifting transformation,  we consider only an
\emph{ellipsoid} centered at origin defined as follows:
\begin{equation}\label{elli}
\mathcal{E}=\{x\in\mathbb{ R}^n ~|~ x^TQx\leq 1\},
\end{equation}
where  $Q\in \mathbb{R}^{n\times n}$ is a symmetric positive
definite matrix.

A \emph{Lorenz cone}  is  also refereed to as ice cream cone, or
second order cone.  Similar to the case of ellipsoids, an arbitrary
Lorenz cone is equivalent to an Lorenz cone with vertex at origin by
a shifting transformation, therefore we only consider  a Lorenz cone
with vertex at origin defined as follows:
\begin{equation}\label{ellicone}
\mathcal{C_L}=\{x\in \mathbb{R}^n~|~x^TQx\leq 0,~ x^TQu_n\leq0\},
\end{equation}
where $Q\in \mathbb{R}^{n\times n}$ is a symmetric nonsingular
matrix with only one negative eigenvalue $\lambda_n$. Thus, we have
${\rm inertia}\{Q\}=\{n-1,0,1\}$, which yields that there exists an
orthonormal basis\footnote{Recall that orthonormal basis means that
$u_i^Tu_j=\delta_{ij}$, where $u_i$ is the eigenvector that
corresponds to $\lambda_i$ and $\delta_{ij}$ is Kronecker delta
function.} $ U=[u_1,u_2,...,u_n]$ such that
\begin{equation}
Q=U\Lambda^{\frac{1}{2}}\tilde{I}\Lambda^{\frac{1}{2}}U^T,
\end{equation}
where
$\Lambda^{\frac{1}{2}}=\text{diag}\{\sqrt{\lambda_1},...,\sqrt{\lambda_{n-1}},\sqrt{-\lambda_n}\}$
and $\tilde{I}=\text{diag}\{1,...,1,-1\}$. If we further operate an
appropriate orthogonal transformation to $\mathcal{C_L}$, then a
Lorenz cone with vertex at origin and axis at a coordinate axis,
which refers to as \emph{standard Lorenz cone} that denoted by
$\mathcal{C_L^*}$, is generated and equivalent  to $\mathcal{C_L}.$
In particular,  we have $ \mathcal{C_L^*}=\{x\in \mathbb{R}^n|
x^T\tilde{I}x\leq 0, x^T\tilde{I}e_n\leq0\}, $ where
$e_n=(0,...,0,1)^T.$

\section{Invariance Conditions}
\subsection{Tangent Cones}
In this subsection, we will derive the formula of the tangent
cones of polyhedra, polyhedral cone, ellipsoid, and Lorenz cone.
According to Nagumo Theorem \ref{nagumo}, the tangent cone is crucial for deriving the sufficient and 
necessary condition for an invariant set. For a given set $\mathcal{S}$, it is easy to
see that the tangent cone of a point $x$ in the interior of
$\mathcal{S}$ is $\mathbb{R}^n,$ thus we only consider the case when
$x$ is on the boudnary of $\mathcal{S}.$

\begin{theorem}\label{icthm1}
Let a polyhedron $\mathcal{P}$ (or a polyhedral cone
$\mathcal{C_P}$) be in the form of (\ref{poly1}) (or
(\ref{polycone1})). Assume $x$ is active at the $i_1$-th,
$i_2$-th,$...,i_k$-th constraints, i.e., $g_{i_1}^Tx=b_{i_1},...,
g_{i_k}^Tx=b_{i_k}$ (or $g_{i_1}^Tx=0,..., g_{i_k}^Tx=0$), then the
tangent cone at $x$ with respect to $\mathcal{P}$ (or
$\mathcal{C_P}$) is
\begin{equation}\label{iceq11}
\mathcal{T_P}(x) ~(\text{or } \mathcal{T_{C_P}}(x))~=\{y\in
\mathbb{R}^n~|~g_{i_j}^Ty\leq 0, j=1,2,...,k\}.
\end{equation}
\end{theorem}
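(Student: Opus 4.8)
The plan is to prove the two set inclusions separately, and to reduce the polyhedral-cone case to the polyhedron case by simply setting $b=0$, since the argument never uses positivity of $b$. Throughout I write $I=\{i_1,\dots,i_k\}$ for the active index set, so that $g_i^Tx=b_i$ for $i\in I$ and, by the hypothesis that only these constraints are active, $g_i^Tx<b_i$ for $i\notin I$. I denote by $D:=\{y\in\mathbb{R}^n\mid g_{i_j}^Ty\leq 0,\ j=1,\dots,k\}$ the candidate cone on the right-hand side of (\ref{iceq11}), and I read $\|x+ty,\mathcal{P}\|$ in (\ref{tancon}) as the Euclidean distance $\mathrm{dist}(x+ty,\mathcal{P})=\inf_{z\in\mathcal{P}}\|x+ty-z\|$, with the limit inferior taken as $t\downarrow 0$ (the one-sided convention forced by nonnegativity of the distance).

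For the inclusion $\mathcal{T_P}(x)\subseteq D$, I would argue directly from the definition. If $y\in\mathcal{T_P}(x)$, then there is a sequence $t_n\downarrow 0$ along which $\mathrm{dist}(x+t_ny,\mathcal{P})=o(t_n)$. Since $\mathcal{P}$ is closed I choose a nearest point $z_n\in\mathcal{P}$, so that $\|x+t_ny-z_n\|=o(t_n)$. Fixing an active index $i_j$ and using $g_{i_j}^Tz_n\leq b_{i_j}=g_{i_j}^Tx$ together with Cauchy--Schwarz gives $t_n\,g_{i_j}^Ty = g_{i_j}^T(x+t_ny)-b_{i_j}\leq g_{i_j}^T(x+t_ny-z_n)\leq \|g_{i_j}\|\,o(t_n)$. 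Dividing by $t_n>0$ and letting $n\to\infty$ forces $g_{i_j}^Ty\leq 0$, so $y\in D$.

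For the reverse inclusion $D\subseteq\mathcal{T_P}(x)$, I would establish the stronger statement that every $y\in D$ is in fact a feasible direction, that is $x+ty\in\mathcal{P}$ for all sufficiently small $t\geq 0$; this makes the distance in (\ref{tancon}) identically zero near $t=0$ and hence places $y$ in the tangent cone. The active constraints remain satisfied automatically, since for $i\in I$ one has $g_i^T(x+ty)=b_i+t\,g_i^Ty\leq b_i$ because $g_i^Ty\leq 0$ and $t\geq 0$. Each inactive constraint has strictly positive slack $\varepsilon_i:=b_i-g_i^Tx>0$, so $g_i^T(x+ty)\leq b_i$ holds as soon as $t\,g_i^Ty\leq \varepsilon_i$; as there are only finitely many constraints, a single positive threshold $\bar t$ (the minimum of $\varepsilon_i/(g_i^Ty)$ over the inactive indices with $g_i^Ty>0$, and $\bar t$ arbitrary if there are none) works uniformly.

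The step requiring the most care is the forward inclusion, where the distance-based definition of the tangent cone must be converted into a sign condition on each active constraint; the Cauchy--Schwarz estimate relating $\mathrm{dist}(x+ty,\mathcal{P})$ to the constraint residual $g_{i_j}^T(x+ty)-b_{i_j}$ is the crux, and it is exactly what lets the $o(t_n)$ bound survive division by $t_n$. The reverse inclusion is essentially bookkeeping, its only subtlety being the need to keep the slack threshold bounded away from zero, which is guaranteed by finiteness of the constraint system. The polyhedral-cone statement then follows verbatim by taking $b_i=0$ for all $i$, since positivity of $b$ is nowhere invoked.
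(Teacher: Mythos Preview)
Your argument is correct and in fact considerably more complete than the paper's. The paper's proof is a two-line sketch that only addresses what you call the reverse inclusion $D\subseteq\mathcal{T_P}(x)$: it takes $\hat x\in\mathcal P$, observes that $y=\hat x-x$ satisfies $g_{i_j}^Ty\leq 0$, and notes that $x+ty\in\mathcal P$ for small $t$ so the distance vanishes. It does not explicitly start from an arbitrary $y\in D$ (so the inactive-constraint slack bookkeeping you carry out is absent), and it does not treat the forward inclusion $\mathcal{T_P}(x)\subseteq D$ at all. Your Cauchy--Schwarz estimate converting the $o(t_n)$ distance bound into a sign condition on each active row is precisely the missing ingredient for that direction; the paper simply asserts the equality without supplying it. So your approach subsumes the paper's and fills its gaps, at the cost of a longer but genuinely two-sided proof.
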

\begin{proof}
For an arbitrary point $\hat{x}$ in $\mathcal{P}$, we have
$g_{i_j}^T\hat{x}\leq b_{i_j},$ which yields
$g_{i_j}^T(\hat{x}-x)\leq 0.$ Then choosing $y=\hat{x}-x$, we have
$x+ty\in \mathcal{P}$ for sufficient small $t$, which deduces that
$\|x+ty,\mathcal{P}\|=0.$ Then we complete the proof.
\end{proof}
Observing the formula of the tangent cone $\mathcal{T_P}$ (or
$\mathcal{T_{C_P}}$) in (\ref{iceq11}), we can find that
$\mathcal{T_P}$ (or $\mathcal{T_{C_P}}$) is a half space when $x$ is
active at a single constraint. As the nonnegative quadrant is a
special case of polyhedral cone, we have the following corollary.

\begin{corollary}
Let the nonnegative quadrant $\mathbb{R}_+^n$ be represented as
$\{x\in \mathbb{R}^n|x_{i}\geq0, i=1,2,...,n\}$.   Assume $x$ is on
the boundary of  $\mathbb{R}_+^n$, i.e., $x_{i_j}=0$ for some
$j=1,2,...,k$, then the tangent cone at $x$  with respect to
$\mathbb{R}_+^n$ is
\begin{equation}\label{3eq13}
\mathcal{T}_{\mathbb{R}^n_+}(x) ~=\{y\in \mathbb{R}^n|y_{i_j}\geq0,
j=1,2,...,k\}.
\end{equation}
\end{corollary}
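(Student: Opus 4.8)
The plan is to recognize the nonnegative quadrant as a polyhedral cone in the form (\ref{polycone1}) and then invoke Theorem \ref{icthm1} directly, so that essentially no new work is required beyond a change of notation. First I would rewrite each defining inequality $x_i \geq 0$ as $(-e^i)^T x \leq 0$, so that $\mathbb{R}^n_+ = \{x \in \mathbb{R}^n \mid Gx \leq 0\}$ with $G = -I_n$ and row vectors $g_i = -e^i$ for $i = 1, \ldots, n$. This exhibits $\mathbb{R}^n_+$ in exactly the polyhedral-cone form $\mathcal{C_P} = R[G,0]$ to which Theorem \ref{icthm1} applies.

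Next I would translate the activity condition. A point $x$ is active at the $i$-th constraint precisely when $g_i^T x = 0$, that is, when $-x_i = 0$, i.e. $x_i = 0$. Hence the hypothesis of the corollary, namely $x_{i_j} = 0$ for $j = 1, \ldots, k$, is literally the assertion that $x$ is active at the $i_1$-th, $i_2$-th, $\ldots, i_k$-th constraints. This matches the hypothesis of Theorem \ref{icthm1} with the constraint index set $\{i_1, \ldots, i_k\}$.

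Finally I would substitute $g_{i_j} = -e^{i_j}$ into the tangent-cone formula (\ref{iceq11}). Theorem \ref{icthm1} gives $\mathcal{T}_{\mathbb{R}^n_+}(x) = \{y \in \mathbb{R}^n \mid g_{i_j}^T y \leq 0,\ j = 1, \ldots, k\}$, and since $g_{i_j}^T y = -y_{i_j}$, each inequality $-y_{i_j} \leq 0$ is equivalent to $y_{i_j} \geq 0$. This yields precisely (\ref{3eq13}), completing the argument.

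Since the statement is a direct specialization of Theorem \ref{icthm1}, there is no genuine obstacle here; the only point demanding care is the sign reversal introduced when the ``$\geq$'' constraints defining $\mathbb{R}^n_+$ are rewritten as ``$\leq$'' constraints, which is exactly what flips the tangent-cone inequalities from $g_{i_j}^T y \leq 0$ back to $y_{i_j} \geq 0$.
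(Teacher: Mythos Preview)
Your proposal is correct and matches the paper's approach exactly: the paper presents this corollary as an immediate specialization of Theorem~\ref{icthm1} (noting only that ``the nonnegative quadrant is a special case of polyhedral cone''), and your explicit identification $G=-I_n$, $g_i=-e^i$ simply spells out that specialization.
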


We now turn to consider the second representation form of polyhedral sets given as in (\ref{poly2}) and (\ref{polycone2}). As an
arbitrary polyhedral set in the form of  (\ref{poly2}) is the union
of a polytope and a polyhedral cone,  we divide polyhedral sets into
two basic classes, i.e., polytope and polyhedral cone. The
difference between these two polyhedral sets is that a polytope is a
bounded set, while a polyhedral cone is unbounded. There is one common
characteristic between these two sets is that they are both
represented by a finite number of vectors, which refers to as
extreme points (or vertex) for polytope versus extreme ray for
polyhedral cone. Therefore, it suffices to consider the tangent cones
at these vectors instead of all points on the boundary. Similar to
the proof in Theorem \ref{icthm1}, the vector $x^j-x^i$ for any $j$
is in the tangent cone at $x^i$. Thus, the following  theorem is
immediate.

\begin{theorem}
Let a polytope $\mathcal{P}$ be in the form of (\ref{poly2}), i.e.,
all $\hat{\theta}_j=0$. Then the tangent cone at vertex $x^i$ with
respect to $\mathcal{P}$ is
\begin{equation}
\mathcal{T_P}(x^i)=\{y\in \mathbb{R}^n|y=\sum_{j=1,j\neq
i}^{\ell_1}\alpha_j^{(i)}(x^j-x^i),~~ \alpha_j^{(i)}\geq0, \text{
for } j=1,2,...,\ell_1\}.
\end{equation}
\end{theorem}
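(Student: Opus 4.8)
The plan is to prove the asserted identity by a double inclusion, writing $C$ for the right-hand side, i.e.\ the conical hull $C = \{\,y : y = \sum_{j \ne i}\alpha_j^{(i)}(x^j - x^i),\ \alpha_j^{(i)} \ge 0\,\}$ of the edge vectors emanating from the vertex $x^i$. The easy inclusion $C \subseteq \mathcal{T_P}(x^i)$ mimics the argument of Theorem~\ref{icthm1}: given $y = \sum_{j\ne i}\alpha_j(x^j - x^i)$ with $\alpha_j \ge 0$, I would rewrite $x^i + ty = (1-ts)x^i + \sum_{j\ne i}(t\alpha_j)x^j$ where $s := \sum_{j\ne i}\alpha_j$, and note that for all sufficiently small $t > 0$ the coefficients are nonnegative and sum to one, so $x^i + ty$ is a convex combination of $x^1,\dots,x^{\ell_1}$ and hence lies in $\mathcal{P}$. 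Then $\|x^i + ty,\mathcal{P}\| = 0$ for all such $t$, the liminf in~(\ref{tancon}) vanishes, and $y \in \mathcal{T_P}(x^i)$.

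For the reverse inclusion $\mathcal{T_P}(x^i) \subseteq C$ I would unwind the definition~(\ref{tancon}). If $y \in \mathcal{T_P}(x^i)$ there is a sequence $t_k \downarrow 0$ with $\|x^i + t_k y,\mathcal{P}\|/t_k \to 0$; taking $p_k \in \mathcal{P}$ to be the nearest point to $x^i + t_k y$ gives $\|x^i + t_k y - p_k\| = o(t_k)$, whence $(p_k - x^i)/t_k \to y$. Writing each $p_k = \sum_j \theta_j^{(k)} x^j$ as a convex combination and using $\sum_j \theta_j^{(k)} = 1$, the vertex $x^i$ cancels and $p_k - x^i = \sum_{j\ne i}\theta_j^{(k)}(x^j - x^i) \in C$; since $C$ is a cone, $(p_k - x^i)/t_k \in C$ as well. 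Hence $y = \lim_k (p_k - x^i)/t_k$ lies in the closure $\overline{C}$.

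It remains to bridge $\overline{C}$ and $C$, and this is the step I expect to be the real obstacle: one needs the fact that the finitely generated cone $C$ is closed. I would settle it by the Minkowski--Weyl theorem, which represents $C$, generated by the finite set $\{x^j - x^i : j \ne i\}$, as an intersection of finitely many half-spaces and so as a closed set; then $\overline{C} = C$ and the two inclusions give $\mathcal{T_P}(x^i) = C$. A route that avoids closedness entirely would be to use Theorem~\ref{icthm1} to read off an $H$-representation of $\mathcal{P}$ near $x^i$ and match it against the active-constraint formula~(\ref{iceq11}); but the cone-closedness argument is cleaner and pinpoints exactly the nontrivial topological fact behind the paper's claim that the theorem is ``immediate''.
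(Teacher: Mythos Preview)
Your argument is correct. The easy inclusion $C \subseteq \mathcal{T_P}(x^i)$ via the convex-combination rewrite is exactly what the paper has in mind: its entire justification is the single sentence ``Similar to the proof in Theorem~\ref{icthm1}, the vector $x^j-x^i$ for any $j$ is in the tangent cone at $x^i$. Thus, the following theorem is immediate.'' That is, the paper records only the forward inclusion and declares the rest obvious.

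Where you go beyond the paper is in the reverse inclusion. Your approximation argument producing $(p_k-x^i)/t_k \to y$ with each term in $C$, followed by the appeal to Minkowski--Weyl for closedness of the finitely generated cone $C$, is precisely the content that the paper's ``immediate'' sweeps under the rug. This is not a different route so much as a completion: the paper's sketch and your first paragraph coincide, and your second and third paragraphs supply the missing half of the double inclusion. The closedness step is indeed the only nontrivial ingredient, and your identification of it as the crux is on target. The alternative you mention---passing to an $H$-description and invoking~(\ref{iceq11})---would also work and is arguably what the paper has in the back of its mind, since Theorem~\ref{icthm1} is the template it cites; but either way one is using the polyhedrality of $C$ in an essential way.
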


\begin{theorem}
Let a polyhedral cone $\mathcal{C_P}$ be in the form of
(\ref{polycone2}). Then the tangent cone at extreme ray $x^i$ with
respect to $\mathcal{C_P}$  is
\begin{equation}\label{3eq11}
\mathcal{T_{C_P}}(x^i)=\{y\in
\mathbb{R}^n|y=\hat{\alpha}_i^{(i)}x^i+\sum_{j=1,j\neq
i}^k\alpha_j^{(i)}x^j, ~~\hat{\alpha}_i^{(i)}\in \mathbb{R},
\alpha_j^{(i)}\geq0, \text{ for } j=1,2,...,\ell \}.
\end{equation}
\end{theorem}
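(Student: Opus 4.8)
The plan is to establish the two set inclusions between $\mathcal{T_{C_P}}(x^i)$ and the candidate set in (\ref{3eq11}), reusing the direct argument already employed in Theorem \ref{icthm1}. Throughout I write $x^1,\dots,x^\ell$ for the generators $\hat{x}^1,\dots,\hat{x}^\ell$ of $\mathcal{C_P}$ and let $\mathcal{K}=\{\,\hat{\alpha}_i^{(i)}x^i+\sum_{j\neq i}\alpha_j^{(i)}x^j \mid \hat{\alpha}_i^{(i)}\in\mathbb{R},\ \alpha_j^{(i)}\geq0\,\}$ denote the right-hand side of (\ref{3eq11}). The first observation is that $\mathcal{K}$ is precisely the conic hull of the finite generating set $\{x^j : j\neq i\}\cup\{x^i,-x^i\}$: the pair $\pm x^i$ supplies the free real coefficient $\hat{\alpha}_i^{(i)}$, while the remaining $x^j$ carry nonnegative coefficients. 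Hence $\mathcal{K}$ is a finitely generated, and therefore closed, convex cone.

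For the inclusion $\mathcal{K}\subseteq\mathcal{T_{C_P}}(x^i)$, it suffices to verify that each generator lies in the tangent cone and then invoke the fact that the tangent cone of a convex set is a convex cone. For $y=x^j$ with $j\neq i$ one has $x^i+ty=x^i+tx^j\in\mathcal{C_P}$ for every $t\geq0$; for $y=x^i$ one has $(1+t)x^i\in\mathcal{C_P}$; and for $y=-x^i$ one has $(1-t)x^i\in\mathcal{C_P}$ for all $0\leq t\leq1$. In each case the perturbed point lies exactly in $\mathcal{C_P}$ for sufficiently small $t>0$, so $\|x^i+ty,\mathcal{C_P}\|=0$ and the limit in (\ref{tancon}) vanishes. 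More generally, for arbitrary $y\in\mathcal{K}$ the combination $x^i+ty=(1+t\hat{\alpha}_i^{(i)})x^i+\sum_{j\neq i}t\alpha_j^{(i)}x^j$ has all nonnegative coefficients once $t$ is small enough that $1+t\hat{\alpha}_i^{(i)}\geq0$, so it again lies in $\mathcal{C_P}$. This is the step where the \emph{free} sign of the coefficient on $x^i$ is essential, since we start from coefficient $1$ on the ray direction.

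For the reverse inclusion $\mathcal{T_{C_P}}(x^i)\subseteq\mathcal{K}$, the cleanest route is to use the standard characterization that for a closed convex set the tangent cone equals the closure of the cone of feasible directions, $\mathcal{T_{C_P}}(x^i)=\overline{\{\lambda(z-x^i):z\in\mathcal{C_P},\ \lambda\geq0\}}$. Substituting $z=\sum_j\hat{\theta}_j x^j$ and setting $\mu_j=\lambda\hat{\theta}_j\geq0$ gives $\lambda(z-x^i)=(\mu_i-\lambda)x^i+\sum_{j\neq i}\mu_j x^j$, and since $\mu_i$ and $\lambda$ range independently over $[0,\infty)$ the quantity $\mu_i-\lambda$ ranges over all of $\mathbb{R}$ while the $\mu_j$ stay nonnegative. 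Thus the feasible-direction cone is exactly $\mathcal{K}$, and because $\mathcal{K}$ is already closed the closure adds nothing. Combining the two inclusions yields (\ref{3eq11}).

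I expect the reverse inclusion to be the main obstacle, for two reasons. First, it relies on identifying the tangent cone with the feasible-direction cone and on the polyhedral fact that this cone is closed, so that no spurious limiting directions appear; anyone wishing to avoid quoting this must instead argue directly that a negative coefficient $\alpha_j^{(i)}<0$ on some $x^j$, $j\neq i$, forces $x^i+ty$ to violate a supporting constraint of $\mathcal{C_P}$ and hence to remain a distance bounded below by a positive multiple of $t$, contradicting (\ref{tancon}). Second, one should note that extremality of the ray $x^i$, although not used in the algebraic computation above, is exactly what guarantees $x^i\in\partial\mathcal{C_P}$, so that checking the tangent cone at these finitely many generators suffices for applying Nagumo's Theorem \ref{nagumo}.
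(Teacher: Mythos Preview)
Your argument is correct and more complete than the paper's own proof, which is really just a sketch: the paper asserts without justification that the tangent cone has the intermediate form
\[
\Bigl\{\,\alpha_i^{(i)}x^i+\sum_{j\neq i}\alpha_j^{(i)}(x^j-x^i)\ :\ \alpha_i^{(i)}\in\mathbb{R},\ \alpha_j^{(i)}\geq0\,\Bigr\},
\]
mirroring the polytope case, and then obtains (\ref{3eq11}) by the substitution $\hat{\alpha}_i^{(i)}=\alpha_i^{(i)}-\sum_{j\neq i}\alpha_j^{(i)}$. You instead work directly with the final generators $\{x^j:j\neq i\}\cup\{\pm x^i\}$ and, crucially, supply the reverse inclusion via the feasible-direction characterization and the closedness of a finitely generated cone---a step the paper omits entirely. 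The paper's route has the minor expository advantage of making the parallel with the polytope theorem explicit (the only new ingredient at a ray is the free sign on $x^i$), while yours is self-contained and actually proves both inclusions. Your closing remark that extremality of $x^i$ is used only to place $x^i$ on $\partial\mathcal{C_P}$ is also a point the paper leaves implicit.
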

\begin{proof}
One can prove that the tangent cone at extreme ray $x^i$ with
respect to $\mathcal{T_{C_P}}$ is $ \mathcal{T_P}(x^i)=\{y\in
\mathbb{R}^n|y={\alpha}_i^{(i)}x^i+\sum_{j=1,j\neq
i}^k\alpha_j^{(i)}(x^j-x^i), ~~{\alpha}_i^{(i)}\in \mathbb{R},
\alpha_j^{(i)}\geq0, \text{ for } j=1,2,..., \ell \}, $ which is
equivalent with (\ref{3eq11}) by letting
$\hat{\alpha}_i^{(i)}={\alpha}_i^{(i)}-\sum_{j\neq i}
\alpha_j^{(i)}.$
\end{proof}

As a matter of fact, the region covered by the tangent cone at an
extreme point with respect to a polytope contains no lines, while
the region covered by the tangent cone at an extreme ray with
respect to a polyhedral cone contains lines.

\begin{corollary}
Let the nonnegative quadrant ${\mathbb{R}_+^n}$ be represented as
$\{x\in \mathbb{R}^n|x=\sum_{i=1}^n\hat\theta_ie^i,
~\hat{\theta}_i\geq0\}$. Then the tangent cone at  $e_i$ with
respect to $\mathbb{R}_+^n$ is
\begin{equation}\label{3eq14}
\mathcal{T}_{\mathbb{R}_+^n}(x) ~=\{y\in \mathbb{R}^n|y_{j}\geq0,
j\neq i\}.
\end{equation}
\end{corollary}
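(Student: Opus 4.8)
The plan is to obtain this corollary as an immediate specialization of the tangent-cone formula for a polyhedral cone given in (\ref{3eq11}). In the representation $\{x\in\mathbb{R}^n \mid x=\sum_{i=1}^n \hat\theta_i e^i,\ \hat\theta_i\ge 0\}$, the nonnegative quadrant $\mathbb{R}_+^n$ is precisely a polyhedral cone of the form (\ref{polycone2}) whose generating vectors are the standard basis vectors $e^1,\dots,e^n$; moreover each $e^i$ is an extreme ray, since it cannot be written as a nonnegative combination of the remaining $e^j$. Hence the hypotheses of the theorem yielding (\ref{3eq11}) are satisfied with $x^i=e^i$ and $x^j=e^j$ for $j\neq i$.

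First I would substitute these generators directly into (\ref{3eq11}), which gives
\begin{equation*}
\mathcal{T}_{\mathbb{R}_+^n}(e^i)=\Big\{y\in\mathbb{R}^n \;\Big|\; y=\hat\alpha_i^{(i)}e^i+\sum_{j\neq i}\alpha_j^{(i)}e^j,\ \hat\alpha_i^{(i)}\in\mathbb{R},\ \alpha_j^{(i)}\ge 0\Big\}.
\end{equation*}
Reading off coordinates, the $i$-th entry of $y$ equals $\hat\alpha_i^{(i)}$, which ranges over all of $\mathbb{R}$, while for each $j\neq i$ the $j$-th entry equals $\alpha_j^{(i)}\ge 0$. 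Therefore the only constraints on $y$ are $y_j\ge 0$ for $j\neq i$, with $y_i$ unconstrained, which is exactly (\ref{3eq14}).

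As a consistency check, I would reconcile this with the half-space description. Writing $\mathbb{R}_+^n=\{x\mid x_j\ge 0,\ j=1,\dots,n\}$, the point $e^i$ satisfies $x_i=1>0$ while $x_j=0$ for every $j\neq i$, so the active constraints at $e^i$ are exactly those indexed by $j\neq i$. Corollary (\ref{3eq13}) then returns $\{y\mid y_j\ge 0,\ j\neq i\}$, in agreement with the formula derived above from the generator representation.

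There is essentially no analytic obstacle here: the statement is a direct reading of (\ref{3eq11}). The only points deserving care are the bookkeeping between the two notations $e^i$ and $e_i$ for the same $i$-th standard basis vector, and the observation that the free coefficient $\hat\alpha_i^{(i)}\in\mathbb{R}$ (rather than a nonnegative one) is precisely what removes the sign constraint on the $i$-th coordinate, reflecting the fact that $e^i$ lies on an extreme ray of the cone rather than being an extreme point of a polytope.
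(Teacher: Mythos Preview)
Your argument is correct and is exactly the intended one: the corollary is stated in the paper without proof precisely because it is the direct specialization of the polyhedral-cone tangent-cone formula (\ref{3eq11}) to the generators $e^1,\dots,e^n$, and your coordinate reading of that formula (together with the optional cross-check against (\ref{3eq13})) is the right way to unpack it.
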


We now analyze the tangent cones with respect to an ellipsoid and
a Lorenz cone. As these two sets are both represented by a
quadratic inequality, they can be considered simultaneously.
Moreover, the two sets are both smooth at the boundaries except the
vertex of Lorenz cone, thus  there exists a tangent space at the
boundaries except the vertex of Lorenz cone. Let us choose
ellipsoids as an example, the outer norm of an arbitrary point $x\in
\partial \mathcal{E} $ is $Qx$, then the tangent space at $x$ is
represented as $\{y\in \mathbb{R}^n|y^TQx=0\}$. Therefore, we have
the following theorem. Note that a similar result for Lorenz cone can refer to \cite{stern}. 

\begin{theorem} \cite{stern}
Let an ellipsoid  $\mathcal{E}$ (or a Lorenz cone $\mathcal{C_L}$)
be in the form of (\ref{elli}) (or (\ref{ellicone})). Assume $x$ is
on the boundary of $\mathcal{E}$ (or $\mathcal{C_L}$), then the
tangent cone at $x$ with respect to $\mathcal{E}$ (or
$\mathcal{C_L}$) is
\begin{equation}\label{3eq12}
\mathcal{T_E}(x) ~(\text{or } \mathcal{T_{C_L}})~=\{y\in
\mathbb{R}^n|y^TQx\leq0, \text{ for all }x\in \mathcal{E} ~( \text{or } \mathcal{C_L})\}.
\end{equation}
\end{theorem}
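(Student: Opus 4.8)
The plan is to establish the half-space characterization directly from the definition of the tangent cone in (\ref{tancon}), treating the ellipsoid and the non-vertex boundary of the Lorenz cone by a single quadratic argument. First I would record the relevant defining function: on $\mathcal{E}$ the active constraint is $g(x)=x^TQx-1$, while on $\mathcal{C_L}$ the active smooth constraint is $h(x)=x^TQx$, and in both cases the gradient is $2Qx$. Since $Q$ is positive definite on $\mathcal{E}$ and nonsingular on $\mathcal{C_L}$, and since any boundary point under consideration has $x\neq0$, the normal $Qx$ is nonzero; hence the boundary is a smooth hypersurface near $x$ and $\{y\mid y^TQx\leq0\}$ is a genuine closed half-space.

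For the inclusion $\{y\mid y^TQx\leq0\}\subseteq\mathcal{T_E}(x)$, I would first take $y$ with the strict inequality $y^TQx<0$ and expand $g(x+ty)=g(x)+2t\,y^TQx+t^2y^TQy=2t\,y^TQx+t^2y^TQy$, using $g(x)=0$. For sufficiently small $t>0$ the linear term dominates the quadratic term, so $g(x+ty)<0$ and $x+ty$ lies in the interior of $\mathcal{E}$; consequently the distance $\|x+ty,\mathcal{E}\|$ vanishes and the liminf in (\ref{tancon}) is zero, placing $y$ in the tangent cone. The boundary case $y^TQx=0$ then follows because the tangent cone of a convex set is closed, hence contains the closure of $\{y\mid y^TQx<0\}$. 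For $\mathcal{C_L}$ the identical expansion of $h$ handles the quadratic constraint; here I would additionally note that at a non-vertex boundary point the linear constraint $x^TQu_n\leq0$ is strict, so by continuity it remains satisfied along $x+ty$ for small $t$ and does not further restrict the cone.

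For the reverse inclusion $\mathcal{T_E}(x)\subseteq\{y\mid y^TQx\leq0\}$, I would exploit convexity through a supporting hyperplane. Because $Qx$ is the outward normal at $x$, every $\hat{x}\in\mathcal{E}$ satisfies $(\hat{x}-x)^TQx\leq0$; this can be verified from $\hat{x}^TQ\hat{x}\leq1=x^TQx$ together with the Cauchy--Schwarz inequality in the inner product induced by $Q$. If $y\in\mathcal{T_E}(x)$, then by (\ref{tancon}) there exist $t_k\downarrow0$ and points $z_k\in\mathcal{E}$ with $\|x+t_ky-z_k\|/t_k\to0$, so that $(z_k-x)/t_k\to y$. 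Applying the supporting inequality to each $z_k$ gives $((z_k-x)/t_k)^TQx\leq0$, and passing to the limit yields $y^TQx\leq0$. Combining the two inclusions gives the claimed formula.

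The step I expect to be the main obstacle is the vertex of the Lorenz cone. There $x=0$ forces $Qx=0$, so the half-space formula degenerates to all of $\mathbb{R}^n$ and no longer describes $\mathcal{T_{C_L}}(0)$, which in fact coincides with $\mathcal{C_L}$ itself; this case must be excluded from the smooth argument and treated by a separate direct computation, consistent with \cite{stern}. A secondary point of care is guaranteeing that the linear term genuinely dominates the quadratic term as $t\to0^+$, which is automatic for $\mathcal{E}$ but for $\mathcal{C_L}$ relies on the strictness of the auxiliary linear constraint to keep $x+ty$ inside the cone.
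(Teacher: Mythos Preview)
Your argument is correct and follows essentially the same idea as the paper, which simply observes that the outward normal at a smooth boundary point is $Qx$ and hence the tangent cone is the closed half-space $\{y:y^TQx\le0\}$, citing \cite{stern} for the Lorenz cone case and explicitly excluding the vertex. Your version is considerably more detailed---carrying out both inclusions from the definition (\ref{tancon}) and correctly isolating the vertex of $\mathcal{C_L}$ as a degenerate case---but the underlying approach is the same.
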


\subsection{Invariance Condition}

In this subsection, we will investigate the sufficient and
necessary conditions under which the involved convex sets in this
paper are invariant sets with respect to a dynamical system as shown
in (\ref{eqn:dy1}).

\begin{theorem}
Let a polyhedron $\mathcal{P}$ (or a polyhedral cone
$\mathcal{C_P}$) be in the form of (\ref{poly1}) (or
(\ref{polycone1})). Then $\mathcal{P}$ (or  $\mathcal{C_P}$) is an
invariant set with respect to the dynamical system (\ref{eqn:dy1})
if and only if any point $x$ on the boundary of $\mathcal{P}$ (or
$\mathcal{C_P}$)  holds  the following condition
\begin{equation}\label{iceq21}
g_{i_j}^Tf(t_0,x)\leq 0, j=1,2,...,k,
\end{equation}
where $x$ is active at at the $i_1$-th, $i_2$-th,$...,i_k$-th
constraints.
\end{theorem}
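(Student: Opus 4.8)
The plan is to obtain this invariance condition as an immediate consequence of Nagumo's Theorem~\ref{nagumo} combined with the tangent cone formula established in Theorem~\ref{icthm1}. First I would check that the hypotheses of Nagumo's theorem are satisfied: the polyhedron $\mathcal{P} = R[G,b]$ (respectively the polyhedral cone $\mathcal{C_P} = R[G,0]$) is closed and convex, being a finite intersection of closed half-spaces, and the assumed global existence and uniqueness of solutions for~(\ref{eqn:dy1}) carries over to initial points in $\mathcal{P}$. Hence Nagumo's theorem applies, and it tells us that $\mathcal{P}$ is an invariant set if and only if $f(t,x) \in \mathcal{T_P}(x)$ holds for every boundary point $x \in \partial\mathcal{P}$.

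The next step is to substitute the explicit tangent cone. By Theorem~\ref{icthm1}, if $x \in \partial\mathcal{P}$ is active exactly at the constraints $i_1,\ldots,i_k$, then $\mathcal{T_P}(x) = \{y \in \mathbb{R}^n \mid g_{i_j}^T y \leq 0,\ j=1,\ldots,k\}$. Therefore the membership $f(t_0,x) \in \mathcal{T_P}(x)$ is, by the very definition of this cone, equivalent to the system of scalar inequalities $g_{i_j}^T f(t_0,x) \leq 0$ for $j=1,\ldots,k$, which is precisely condition~(\ref{iceq21}). Since both the Nagumo characterization and this substitution are equivalences, the desired ``if and only if'' follows directly, and the identical argument applies verbatim to the polyhedral cone $\mathcal{C_P}$ upon setting $b=0$.

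I do not anticipate a deep obstacle here, as the result is essentially a translation of the abstract Nagumo criterion into the concrete half-space description. The point that requires the most care is ensuring the statement is applied uniformly at \emph{every} boundary point: a boundary point may be active at a single facet, or at several constraints simultaneously (as at an edge or vertex), and I would emphasize that the tangent cone formula of Theorem~\ref{icthm1} already encodes all such cases through the active index set $\{i_1,\ldots,i_k\}$, so no separate treatment of lower-dimensional faces is needed. A secondary point worth a brief remark concerns the time argument: Nagumo's criterion requires the inclusion $f(t,x) \in \mathcal{T_P}(x)$ at the relevant time, so condition~(\ref{iceq21}) should be understood as holding at the time instant $t_0$ at which the trajectory reaches the boundary point $x$.
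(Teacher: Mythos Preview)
Your proposal is correct and matches the paper's approach exactly: the paper states this theorem without proof, treating it as an immediate consequence of Nagumo's Theorem~\ref{nagumo} together with the tangent cone formula of Theorem~\ref{icthm1}, which is precisely the two-step argument you outline. Your additional remarks about verifying closedness/convexity and handling all boundary faces through the active index set are appropriate clarifications that the paper leaves implicit.
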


\begin{corollary}
Let the nonnegative quadrant $\mathbb{R}_+^n$ be represented as
$\{x\in \mathbb{R}^n|x_{i}\geq0, i=1,2,...,n\}$. Then
$\mathbb{R}_+^n$ is an invariant set with respect to the dynamical
system (\ref{eqn:dy1}) if and only if any point $x$ on the boundary
of $\mathbb{R}_+^n$  holds  the following condition
\begin{equation}\label{iceq22}
f_{i_j}(t_0,x)\geq0, j=1,2,...,k,
\end{equation}
where $x_{i_j}=0$ for $j=1,2,...,k$.
\end{corollary}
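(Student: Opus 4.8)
The plan is to derive this corollary as a direct specialization of the preceding theorem on polyhedra, so the proof should be short. First I would observe that the nonnegative quadrant $\mathbb{R}_+^n = \{x \mid x_i \geq 0,\, i=1,\dots,n\}$ is exactly a polyhedron in the form (\ref{poly1}) once we rewrite each constraint $x_i \geq 0$ as $-x_i \leq 0$. Concretely, this means taking $G = -I$ and $b = 0$, so that the $i$-th constraint row is $g_i = -e^i$ and the right-hand side is $b_i = 0$. With this identification the hypotheses of the polyhedral invariance theorem are met verbatim.

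Next I would apply the invariance condition (\ref{iceq21}) to this choice of $G$. A boundary point $x$ of $\mathbb{R}_+^n$ is active at the $i_1,\dots,i_k$-th constraints precisely when $x_{i_j} = 0$ for $j=1,\dots,k$, which matches the hypothesis stated in the corollary. The abstract condition $g_{i_j}^T f(t_0,x) \leq 0$ then becomes $(-e^{i_j})^T f(t_0,x) \leq 0$, and since $(e^{i_j})^T f(t_0,x)$ is simply the $i_j$-th component $f_{i_j}(t_0,x)$, this inequality reads $-f_{i_j}(t_0,x) \leq 0$, i.e. $f_{i_j}(t_0,x) \geq 0$. This is exactly condition (\ref{iceq22}), so the equivalence transfers directly.

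I would close by noting that the "if and only if" is inherited automatically: the polyhedral theorem already asserts that (\ref{iceq21}) holds at every boundary point if and only if $\mathcal{P}$ is invariant, and our substitution is a reversible sign change applied to each relevant constraint, so no direction of the equivalence is weakened. Thus $\mathbb{R}_+^n$ is invariant for (\ref{eqn:dy1}) if and only if $f_{i_j}(t_0,x) \geq 0$ holds at every boundary point whose vanishing coordinates are indexed by $i_1,\dots,i_k$.

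Honestly, I expect no genuine obstacle here, since the statement is a corollary by construction; the only thing to be careful about is the bookkeeping of the sign flip, namely ensuring that the reader sees clearly that rewriting $x_i \geq 0$ as $g_i^T x \leq 0$ with $g_i = -e^i$ is what turns the "$\leq 0$" of the general polyhedral condition into the "$\geq 0$" that appears for the positive orthant. One could also remark (though it is not strictly necessary) that the tangent-cone corollary (\ref{3eq13}) already encodes this sign correspondence, so the invariance corollary is really just Nagumo's condition $f(t_0,x) \in \mathcal{T}_{\mathbb{R}^n_+}(x)$ read off componentwise.
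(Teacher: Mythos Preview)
Your proposal is correct and follows exactly the intended route: the paper states this result as an unproved corollary of the polyhedral invariance theorem, and your argument---identifying $\mathbb{R}_+^n$ with $G=-I$, $b=0$ and reading off the sign flip in condition (\ref{iceq21})---is precisely the specialization the paper has in mind. There is nothing to add.
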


\begin{theorem}
Let a polytope $\mathcal{P}$ be in the form of (\ref{poly2}), i.e.,
all $\hat{\theta}_j=0$. Then $\mathcal{P}$ is an invariant set with
respect to the dynamical system (\ref{eqn:dy1}) if and only if for
any extreme point $x^i$, there exists nonnegative scalars
$\alpha_j^{(i)}\geq0, \text{ for } j\neq i, j=1,2,...,\ell_1$  such
that
 the following condition holds
\begin{equation}\label{iceq23}
f(t_0,x^i)=\sum_{j=1,j\neq i}^{\ell_1}\alpha_j^{(i)}(x^j-x^i).
\end{equation}
\end{theorem}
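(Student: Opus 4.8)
The plan is to combine Nagumo's Theorem (Theorem~\ref{nagumo}) with the tangent-cone formula for a polytope at a vertex established above. Since the polytope $\mathcal{P}$ is closed and convex, Nagumo's Theorem applies and reduces the invariance of $\mathcal{P}$ to the subtangentiality condition $f(t_0,x)\in\mathcal{T_P}(x)$ holding at every boundary point $x\in\partial\mathcal{P}$. At an extreme point $x^i$ the tangent cone was shown to be the conic hull of the directions $\{x^j-x^i:j\neq i\}$, so membership $f(t_0,x^i)\in\mathcal{T_P}(x^i)$ is exactly the existence of nonnegative scalars $\alpha_j^{(i)}$ with $f(t_0,x^i)=\sum_{j\neq i}\alpha_j^{(i)}(x^j-x^i)$, i.e.\ condition~(\ref{iceq23}).

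The ``only if'' direction is then immediate: if $\mathcal{P}$ is invariant, Nagumo forces $f(t_0,x)\in\mathcal{T_P}(x)$ at every boundary point, in particular at each vertex $x^i$, and substituting the vertex tangent-cone formula yields~(\ref{iceq23}).

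The substance of the proof lies in the ``if'' direction, and this is the step I expect to be the main obstacle. Nagumo demands the subtangentiality condition at \emph{all} boundary points, whereas~(\ref{iceq23}) is imposed only at the finitely many vertices, so I must upgrade the vertex condition to a condition on every face. I would argue face-by-face: an arbitrary boundary point $x$ lies in the relative interior of a unique face $F=\mathrm{conv}\{x^i:i\in I\}$, and I would pass to the half-space description $\mathcal{P}=R[G,b]$ of~(\ref{poly1}) so that, by Theorem~\ref{icthm1}, $\mathcal{T_P}(x)=\{y:g_{i_j}^Ty\leq0\}$ for the constraints active on $F$. The goal then reduces to showing $g_{i_j}^Tf(t_0,x)\leq0$ on $F$, i.e.\ condition~(\ref{iceq21}), given that it holds at the vertices of $F$.

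Here the genuine difficulty surfaces. For a general continuous field $f$, the values of $f$ at the vertices of $F$ say nothing about $f$ at interior points of $F$, so the vertex condition alone cannot control $g_{i_j}^Tf(t_0,x)$ across the whole face. The reduction is valid precisely when $x\mapsto g_{i_j}^Tf(t_0,x)$ is affine on each face --- in particular when $f(t_0,\cdot)$ is affine, e.g.\ the linear system $\dot{x}=Ax$ --- because an affine functional on the polytope $F$ attains its maximum at a vertex of $F$, and the vertices of $F$ are among the $x^i$. I would therefore either make this affineness explicit (which is consistent with the half-space invariance theorem preceding this statement) or interpret~(\ref{iceq23}) as shorthand for the full boundary condition; under the affine reading, the vertex maximum principle closes the ``if'' direction and completes the equivalence.
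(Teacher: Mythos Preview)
The paper does not give an explicit proof of this theorem; it is evidently meant to be read as an immediate consequence of Nagumo's Theorem~\ref{nagumo} together with the vertex tangent-cone formula established just before it. Your proposal follows exactly this route for the ``only if'' direction, and in that respect you match the paper.

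Your discussion of the ``if'' direction, however, goes beyond the paper and correctly isolates a genuine gap that the paper glosses over. Nagumo's Theorem requires the subtangentiality condition at \emph{every} boundary point, whereas condition~(\ref{iceq23}) constrains $f$ only at the finitely many vertices. For a general continuous field $f$ this is not enough: one can construct nonlinear vector fields satisfying~(\ref{iceq23}) at each vertex yet pointing strictly outward somewhere on a face (for instance, on the square $[-1,1]^2$ take $f(x,y)=(-x+2(1-y^2),\,0)$; the vertex condition holds, but the field exits through the point $(1,0)$). As you note, the reduction from the boundary to the vertices is valid precisely when $x\mapsto f(t_0,x)$ is affine---then each functional $g_{i_j}^Tf(t_0,\cdot)$ is affine on every face and attains its maximum at a vertex, so condition~(\ref{iceq21}) propagates from the vertices to the whole face. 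The paper does not state this hypothesis, so your caution is warranted; your proposal is in fact more careful than the paper's own treatment.
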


We now investigate the way to verify the existence of the
coefficients $\alpha_j^{(i)}$ in (\ref{iceq23}), which might be not
unique when $\ell_1>n$, i.e., the number of vertices is greater than
the dimension of points.  Condition (\ref{iceq23}) is equivalently
reformulated as
\begin{equation}\label{iceq231}
f(t_0,x^i)=\sum_{j\neq i}\alpha_j^{(i)}x^j-\left(\sum_{j\neq
i}\alpha_j^{(i)}\right)x^i=\sum_{j=1 }^{\ell_1}\alpha_j^{(i)}x^j,
\text{ with } \sum_{j=1 }^{\ell_1}\alpha_j^{(i)}=0,
\end{equation}
where $\alpha_i^{(i)}=-\sum_{j\neq i }\alpha_j^{(i)}$.

For the sake of simplicity, we denote $X=[x^1,x^2,...,x^{\ell_1}],$
 $\tilde{X}=[\tilde{x}^1,\tilde{x}^2,...,\tilde{x}^{\ell_1}]=[X^T,e]^T,$ $\tilde{f}=(f^T,0)^T,$ and
$\alpha^{(i)}=(\alpha_1^{(i)},\alpha_2^{(i)},...\alpha_{\ell_1}^{(i)})^T.$
Then two optimization models can be built to solve the coefficients
in (\ref{iceq23}).

The first model that essentially is a linear feasibility problem is
represented as follows:
\begin{equation}\label{iceq27}
\begin{aligned}
\min ~        & ~~0\\
\text{ s.t. } & ~\tilde{X}\alpha^{(i)}=\tilde{f}, ~~
\alpha_j^{(i)}\geq 0, ~j\neq i.
\end{aligned}
\end{equation}
As the objective function in (\ref{iceq27}) is a fixed number, the
optimization model (\ref{iceq27}) has optimal solution, which is not
necessary unique, if and only if the condition (\ref{iceq23}) has
solution. Without loss of generality, we assume $X$ has the property
that its column vectors are independent, and thereby $\tilde{X}$ as
well. Then the model (\ref{iceq27}) can be discussed into to two
cases: the number of rows of $\tilde{X}$ is greater than or equal to
the number of columns of $\tilde{X}$, and the number of rows of
$\tilde{X}$ is less than the number of columns of $\tilde{X}$. As a
matter of fact, the first case is trivial, since the equation
$\tilde{X}\alpha^{(i)}=\tilde{f}$, which assume it has solutions,
has an unique solution, which is explicitly represented as
$$
\alpha^{(i)}=(\tilde{X}^T\tilde{X})^{-1}\tilde{X}^T\tilde{f}.
$$
Then one only needs to check whether the solution satisfies the
condition that $\alpha_j^{(i)}\geq 0, j\neq i$. In the second case,
since the equation $\tilde{X}\alpha^{(i)}=\tilde{f}$ always has
solutions that might be unique or not, which means the optimization
model (\ref{iceq27}) is always feasible, it is hard to obtain the
solution of this equation directly. 

There are normally two ways to
solve the optimization model (\ref{iceq27}): pivot methods or
interior point methods (IPMs). The pivot methods, e.g., Simplex
algorithm\cite{bert}, Criss-Cross algorithm\cite{Terlaky}, have
exponential complexity. The basic idea of pivot methods is
enhancing, i.e, decreasing the objective function if it is a
minimization problem, the optimization problem along the edges of
the polyhedral set which is the feasible region of this optimization
problem by pivoting from one vertex to another. The IPMs
\cite{Terlaky1} have polynomial complexity. The basic idea of IPMs
is enhancing the optimization problem by along a central path
interior the feasible region. Although IPMs have polynomial
complexity, pivot methods shows great efficiency when the problem is
not very large for linear optimization. The advantage of IPMs
becomes significant when the size of the problem is large.

To solve the optimization model (\ref{iceq27}), one can also
consider its dual problem, which is also a linear method and
represented as
\begin{equation}
\begin{aligned}
\max~ & ~\tilde{f}^Ty\\
\text{ s.t. } &~(\tilde{x}^i)^Ty=0,\\
&~(\tilde{x}^j)^Ty\leq 0, ~~j\neq i,
\end{aligned}
\end{equation}
Then according to the knowledge in optimization, e.g., \cite{Roos},
the optimality condition that is the sufficient and necessary
condition of the existence of the optimal solution for an
optimization problem, by introducing the artificial variables
$s_j^{(i)}$, is
\begin{equation}
\tilde{X}\alpha^{(i)}=\tilde{f}, ~(\tilde{x}^i)^Ty=0,~~
(\tilde{x}^j)^Ty+s_j^{(i)}=0, ~~\alpha_j^{(i)},s_j^{(i)}\geq
0,~~j\neq i.
\end{equation}

The second model that is a quadratic optimization model is as
follows:
\begin{equation}\label{iceq232}
\begin{aligned}
\min~ & ~\small{\frac{1}{2}}\left\|X\alpha^{(i)}-f\right\|_2^2\\
\text{ s.t. } &~ e^T\alpha^{(i)}=0, ~~\alpha_j^{(i)}\geq0, ~j\neq i.\\
\end{aligned}
\end{equation}
One key difference between model (\ref{iceq27}) and (\ref{iceq232})
is that the former one might be infeasible, while the latter one is
always feasible, which implies the latter model always has optimal
solutions.  The objective function in (\ref{iceq232}) is the half of
the square of the distance between $f$ and $X\alpha^{(i)},$
therefore the optimal objective function value is exactly equal to 0
if model (\ref{iceq27})  is feasible. Otherwise, the model
(\ref{iceq232}) will output the point that is closest to the
polyhedral set defined by the vectors $x^1,...,x^{\ell_1}$, in which
case, the optimal objective function value is strictly positive. To
solve the quadratic model (\ref{iceq232}), we consider its
Lagrangian, which, by introducing the dual variables
$\eta^{(i)}=(\eta_1^{(i)},\eta_2^{(i)},...,\eta_{\ell_1}^{(i)})^T$
with $\eta_j^{(i)}\geq0$ for $j\neq i,$ is described as
\begin{equation}
L=\frac{1}{2}\left\|X\alpha^{(i)}-f\right\|_2^2+\eta_i^{(i)}e^T\alpha^{(i)}-\sum_{j\neq
i}\eta_j^{(i)}\alpha_j^{(i)}.
\end{equation}
The Karush-Kuhn-Tucker (KKT) condition \cite{nocedal} is normally
chosen as the first-order necessary optimality condition of  an
optimization problem. To verify the KKT condition of an optimization
problem, one needs to check whether this problem satisfies a so
called linear independence constraint qualification (LICQ) holds.
The LICQ is said to hold at a point $x$ if the gradients of the
active constraint are linearly independent at $x$. It is easy to
verify that optimization problem (\ref{iceq232}) holds LICQ. Then
the KKT condition of (\ref{iceq232}) is shown as follows:
\begin{eqnarray}
X^T(X\alpha^{(i)}-f)+\eta_i^{(i)}e-\eta^{(i)}(\uparrow_i0)&=&0\\
 e^T\alpha^{(i)}&=&0\\
\alpha_j^{(i)}&\geq&0, j\neq i\\
\eta_j^{(i)}\alpha_j^{(i)}&=&0, j\neq i
\end{eqnarray}
where $\eta^{(i)}(\uparrow_i0)$ denotes the $i$-th entry in
$\eta^{(i)}$ is replaced by $0.$

\begin{theorem}
Let a polyhedral cone $\mathcal{C_P}$ be in the form of
(\ref{polycone2}). Then $\mathcal{C_P}$ is an invariant set with
respect to the dynamical system (\ref{eqn:dy1}) if and only if for
any extreme ray $x^i$, there exists nonnegative scalars
$\alpha_j^{(i)}\geq0, \text{ for }j\neq i, j=1,2,...,\ell$, and
$\hat{\alpha}_i^{(i)}\in \mathbb{R},$ such that the following
condition holds
\begin{equation}\label{iceq24}
f(t_0,x^i)=\hat{\alpha}_i^{(i)}x^i+\sum_{j=1,j\neq
i}^k\alpha_j^{(i)}x^j.
\end{equation}
\end{theorem}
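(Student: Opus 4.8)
The plan is to apply Nagumo Theorem~\ref{nagumo} directly, which reduces the invariance question to the membership condition $f(t_0, x) \in \mathcal{T}_{\mathcal{C_P}}(x)$ for every boundary point $x$. Since $\mathcal{C_P}$ is a closed convex set (being a finitely generated polyhedral cone) and the system is assumed to admit a globally unique solution, the hypotheses of Nagumo Theorem are satisfied, so invariance is equivalent to the tangent-cone condition holding on all of $\partial \mathcal{C_P}$. The core of the argument is therefore to show that checking the condition at the finitely many generators $x^i$ suffices to guarantee it at every boundary point.

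First I would invoke the tangent-cone formula from~(\ref{3eq11}): at the extreme ray $x^i$, the condition $f(t_0, x^i) \in \mathcal{T}_{\mathcal{C_P}}(x^i)$ is precisely the statement that $f(t_0, x^i)$ can be written as $\hat{\alpha}_i^{(i)} x^i + \sum_{j \neq i} \alpha_j^{(i)} x^j$ with $\hat{\alpha}_i^{(i)} \in \mathbb{R}$ and $\alpha_j^{(i)} \geq 0$, which is exactly~(\ref{iceq24}). So the necessity direction is immediate: if $\mathcal{C_P}$ is invariant, then by Nagumo the tangent-cone membership holds at each $x^i$, giving~(\ref{iceq24}). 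For sufficiency, I would argue that the generators $x^i$ are the essential boundary points: any boundary point of the cone lies on a face spanned by a subset of the generators, and the tangent cone at such a point contains the tangent cones at the generators of that face. The continuity of $f$ together with the conic structure should let me propagate the condition from the generating rays to the whole boundary, so that verifying~(\ref{iceq24}) at each generator yields $f(t_0, x) \in \mathcal{T}_{\mathcal{C_P}}(x)$ for all $x \in \partial \mathcal{C_P}$.

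The main obstacle I expect is rigorously justifying the reduction from all boundary points to the finitely many extreme rays. Unlike the halfspace representation~(\ref{polycone1}), where the active-constraint structure makes the tangent cone explicit at every point, the generator representation~(\ref{polycone2}) requires understanding how the tangent cone varies as $x$ moves along a face and how the representation of $f(t_0, x)$ in terms of generators behaves under the flow. In particular, one must ensure there is no boundary point where the condition fails despite holding at all generators; this is where the convexity of the cone and the linearity of the generating structure must be used carefully, paralleling the remark following Theorem~\ref{icthm1} that the vector $x^j - x^i$ lies in the tangent cone at $x^i$. A clean way to finish is to note that $\mathcal{C_P}$ is the union of its generating rays under conic combination, so the earlier tangent-cone theorem~(\ref{3eq11}) already encodes the full local geometry at each generator, and the equivalence in~(\ref{iceq24}) is then just the restatement of Nagumo's condition at those points.
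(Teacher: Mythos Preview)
Your approach is exactly the one the paper intends: the theorem is stated there without a written proof, being an immediate combination of Nagumo's Theorem~\ref{nagumo} with the tangent-cone formula~(\ref{3eq11}), and that is precisely what you do. The necessity direction you give is complete and matches the paper.

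One point worth flagging: the concern you raise in your final paragraph is real and is not actually resolved in the paper either. The paper justifies the reduction to extreme rays only by the informal sentence preceding the tangent-cone theorems, ``it suffices to consider the tangent cones at these vectors instead of all points on the boundary.'' For a general nonlinear $f$ this reduction is \emph{not} automatic: one can have $f(t_0,x^i)\in\mathcal{T}_{\mathcal{C_P}}(x^i)$ at every generator while $f(t_0,x)\notin\mathcal{T}_{\mathcal{C_P}}(x)$ at some other boundary point (e.g.\ on $\mathbb{R}^2_+$ take $f(x_1,x_2)=(0,-x_1(x_1-1))$; the condition holds at $e^1,e^2$ but fails at $(\tfrac12,0)$). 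The equivalence as stated therefore implicitly relies on additional structure of $f$ (linearity or positive homogeneity along rays) that makes the tangent-cone condition propagate from generators to all boundary points. Your instinct that this step needs care is correct; the paper simply does not supply that argument, so your proposal is at least as complete as what appears there.
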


\begin{corollary}
Let the nonnegative quadrant ${\mathbb{R}_+^n}$ be represented as
$\{x\in \mathbb{R}^n|x=\sum_{i=1}^n\hat\theta_ie^i,
~\hat{\theta}_i\geq0\}$. Then ${\mathbb{R}_+^n}$ is an invariant set
with respect to the dynamical system (\ref{eqn:dy1}) if and only if
for any extreme ray $e^i$,  the following condition holds
\begin{equation}\label{iceq25}
f_j(t_0,e^i)\geq0, j\neq i.
\end{equation}
\end{corollary}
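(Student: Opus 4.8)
The plan is to obtain this corollary directly from the preceding theorem on polyhedral cones in the form (\ref{polycone2}), by specializing to the case where the cone is the nonnegative quadrant. The first step is to identify the extreme rays: the generators of $\mathbb{R}_+^n$ in the representation $\{x\mid x=\sum_{i=1}^n\hat\theta_i e^i,\ \hat\theta_i\geq0\}$ are precisely the standard basis vectors $e^1,\dots,e^n$, so I would take these $n$ generators as the $x^j$ and set $x^i=e^i$ in condition (\ref{iceq24}). The theorem then asserts that $\mathbb{R}_+^n$ is invariant if and only if, for each $i$, there exist $\hat\alpha_i^{(i)}\in\mathbb{R}$ and $\alpha_j^{(i)}\geq0$ (for $j\neq i$) such that $f(t_0,e^i)=\hat\alpha_i^{(i)}e^i+\sum_{j\neq i}\alpha_j^{(i)}e^j$.

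The key step is to exploit the fact that $\{e^1,\dots,e^n\}$ is an orthonormal basis of $\mathbb{R}^n$, so the expansion of the vector $f(t_0,e^i)$ in this basis is unique and its coefficients are exactly the coordinates $f_1(t_0,e^i),\dots,f_n(t_0,e^i)$. Matching coefficients in the displayed identity forces $\hat\alpha_i^{(i)}=f_i(t_0,e^i)$ and $\alpha_j^{(i)}=f_j(t_0,e^i)$ for $j\neq i$. Since $\hat\alpha_i^{(i)}$ is allowed to range over all of $\mathbb{R}$, the equation for the $i$-th coordinate imposes no restriction on $f_i(t_0,e^i)$; the only binding requirement comes from the sign constraints $\alpha_j^{(i)}\geq0$, which translate to $f_j(t_0,e^i)\geq0$ for every $j\neq i$.

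Putting the two steps together, the existence of admissible coefficients for the ray $e^i$ is equivalent to (\ref{iceq25}), and requiring this for every $i$ yields the corollary. There is no real obstacle here: the computation is routine once one recognizes that linear independence of the generators removes all freedom in the coefficients, thereby converting the abstract solvability condition (\ref{iceq24}) into the componentwise sign conditions. The only point I would emphasize in the write-up is the role of the free scalar $\hat\alpha_i^{(i)}$, which explains why no condition is placed on the $i$-th component $f_i(t_0,e^i)$ while all remaining components are required to be nonnegative.
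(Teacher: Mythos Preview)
Your proposal is correct and is exactly the intended derivation: the paper states this corollary without proof, as an immediate specialization of the preceding theorem on polyhedral cones in the form (\ref{polycone2}). Your argument---identifying the generators $e^1,\dots,e^n$ as the extreme rays, using their linear independence to read off the unique coefficients $\alpha_j^{(i)}=f_j(t_0,e^i)$, and noting that the free scalar $\hat\alpha_i^{(i)}$ absorbs the $i$-th component---is precisely what the authors have in mind.
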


\begin{theorem}\label{icthm31}
Let an ellipsoid  $\mathcal{E}$ (or a Lorenz cone $\mathcal{C_L}$)
be in the form of (\ref{elli}) (or (\ref{ellicone})). Then
$\mathcal{E}$ (or $\mathcal{C_L}$) is an invariant set with respect
to the dynamical system (\ref{eqn:dy1}) if and only if any point $x$
on the boundary of $\mathcal{E}$ (or $\mathcal{C_L}$)   holds  the
following condition
\begin{equation}\label{iceq26}
x^TQf(t_0,x)\leq0.
\end{equation}
\end{theorem}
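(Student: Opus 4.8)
The plan is to read off both conditions as an immediate consequence of Nagumo's Theorem (Theorem \ref{nagumo}) once the tangent cone of each set is substituted. First I would invoke Nagumo: since $\mathcal{E}$ and $\mathcal{C_L}$ are closed convex sets and $f$ is continuous with globally unique solutions, the set is invariant for (\ref{eqn:dy1}) if and only if $f(t_0,x)\in\mathcal{T}(x)$ at every boundary point $x$. Next I would insert the tangent cone formula (\ref{3eq12}): membership $f(t_0,x)\in\mathcal{T_E}(x)$ (respectively $\mathcal{T_{C_L}}(x)$) is, by that formula, exactly the scalar inequality $(f(t_0,x))^TQx\le0$. Finally, since $Q$ is symmetric in both cases, $(f(t_0,x))^TQx=x^TQf(t_0,x)$, which is precisely condition (\ref{iceq26}). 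For the ellipsoid this settles both directions at once, because $Q\succ0$ makes $\partial\mathcal{E}=\{x:x^TQx=1\}$ a smooth hypersurface and (\ref{3eq12}) is valid at every one of its points.

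The one place requiring genuine care, and what I expect to be the main obstacle, is the vertex of the Lorenz cone, namely the origin: there the boundary is not smooth, and (\ref{3eq12}) was derived only for smooth boundary points. At $x=0$ condition (\ref{iceq26}) degenerates to $0\le0$ and so carries no information, yet Nagumo still demands $f(t_0,0)\in\mathcal{T_{C_L}}(0)=\mathcal{C_L}$, since the tangent cone of a convex cone at its vertex is the cone itself. The reverse implication (invariance $\Rightarrow$ (\ref{iceq26})) is untroubled by this, as the condition at the vertex is then automatic; the difficulty lies in the forward implication.

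To close the forward direction at the vertex I would combine a scaling/continuity argument with self-duality of the Lorenz cone. For any nonzero boundary point $x$ (so $x^TQx=0$, $x^TQu_n\le0$) and any $\lambda>0$, the point $\lambda x$ is again a boundary point, and the hypothesis gives $(\lambda x)^TQf(t_0,\lambda x)\le0$, i.e. $x^TQf(t_0,\lambda x)\le0$; letting $\lambda\to0^+$ and using continuity of $f$ yields $x^TQf(t_0,0)\le0$ for every boundary ray $x$, hence $\langle x,Qv\rangle\le0$ for all $x\in\mathcal{C_L}$ by conic combination, where $v=f(t_0,0)$. This says $-Qv$ lies in the dual cone of $\mathcal{C_L}$; after reducing to the standard cone $\mathcal{C_L^*}$ via the orthonormal basis $U$ and the scaling $\Lambda^{1/2}$, the second-order cone is self-dual, and a short check then forces $v\in\mathcal{C_L}$. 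Thus $f(t_0,0)\in\mathcal{T_{C_L}}(0)$, Nagumo's condition holds at every boundary point, and the proof is complete.
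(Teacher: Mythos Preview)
Your approach---apply Nagumo's Theorem and substitute the tangent-cone formula (\ref{3eq12})---is exactly how the paper obtains the result; in fact the paper gives no separate proof of Theorem~\ref{icthm31} at all, treating it as immediate from the preceding tangent-cone computation together with the symmetry of $Q$.

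Where you go beyond the paper is at the vertex of the Lorenz cone. The paper remarks, just before stating (\ref{3eq12}), that the boundary is smooth ``except the vertex of Lorenz cone'' and then never returns to this exception. You correctly observe that (\ref{iceq26}) is vacuous at $x=0$ while Nagumo still demands $f(t_0,0)\in\mathcal{T_{C_L}}(0)=\mathcal{C_L}$, and you supply a valid scaling/continuity plus self-duality argument to recover this from the hypothesis at the nonzero boundary points. Your step ``by conic combination'' is justified because every point of the Lorenz cone is a nonnegative sum of two boundary rays, so the linear inequality $x^TQv\le 0$ propagates from $\partial\mathcal{C_L}$ to all of $\mathcal{C_L}$; the reduction to the standard cone then gives $v\in\mathcal{C_L}$ as you claim. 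This vertex analysis is a genuine and correct refinement that the paper simply omits.
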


According to Theorem \ref{icthm31}, one has to check whether all
points on the boundary of an ellipsoid or a Lorenz cone satisfy
condition (\ref{iceq26}). But it is complicated if we directly
examine condition (\ref{iceq26}) along the boundary of an ellipsoid
or a Lorenz cone. We present an optimization method to solve this
problem. For an ellipsoid $\mathcal{E},$ we consider the following
optimization model.
\begin{equation}\label{iceq41}
\begin{aligned}
\max~ & ~x^TQf(t_0,x)\\
\text{ s.t. } &~ x^TQx=1,
\end{aligned}
\end{equation}
where $Q$ is a symmetric positive definite matrix. This is not a
convex problem, as the constraint is nonconvex.
\begin{equation}\label{iceq42}
\begin{aligned}
\max~ & ~x^TQf(t_0,x)\\
\text{ s.t. } &~ x^TQx=1,
\end{aligned}
\end{equation}

We consider the linear dynamical system, i.e., $f(t_0,x)=Ax$. Then
the optimization problem can be formulated as
\begin{equation}\label{iceq43}
\begin{aligned}
\min~ & ~-\frac{1}{2} x^T(A^TQ+QA)x\\
\text{ s.t. } &~ x^TQx=1,
\end{aligned}
\end{equation}
The Lagrangian of optimization problem (\ref{iceq43}) is as follows:
\begin{equation}
L = -\frac{1}{2}x^T(QA+A^TQ)x+\frac{\eta}{2} (x^TQx-1).
\end{equation}
It is easy to check the optimization problem (\ref{iceq43})
satisfies LICQ condition, thus the first order optimality contrition
(KKT condition) is
\begin{equation}\label{iceq44}
\begin{aligned}
(A^TQ+QA-\eta Q)x &=&0\\
 x^TQx&=&1
\end{aligned}
\end{equation}
It is easy to check $x$ is not equal to 0, and $A^TQ+QA-\eta Q$ has
to be singular. Also note that $(A^TQ+QA)x=\eta Qx$, which is
substituted into the objective function in (\ref{iceq43}), we have
$-\frac{1}{2} x^T(A^TQ+QA)x=-\frac{1}{2} \eta x^TQx=-\frac{1}{2}
\eta$, where we apply $x^TQx=1$. Therefore $\eta\leq0$ such that the
optimal objective function value is always nonnegative.

Now we consider the second order optimality condition, which can be
written as
\begin{eqnarray}
d^T(A^TQ+QA-\eta Q)d &\leq&0\label{iceq451}\\
 d^TQx&=&0\label{iceq452}\\
(A^TQ+QA-\eta Q)x &=&0\label{iceq453}\\
 x^TQx&=&1\label{iceq454}
\end{eqnarray}
We now show that (\ref{iceq451})-(\ref{iceq454}) yield $A^TQ+QA-\eta
Q\preceq0.$ Since $A^TQ+QA-\eta Q$ is singular, the  condition in
(\ref{iceq454}) can be replaced by $x\neq0.$ The we consider the
following two cases: if $d=Qx$ also satisfies condition
(\ref{iceq451}), then we can say that condition (\ref{iceq451})
satisfies for any $d\in \mathbb{R}^n$, which is equivalent to
$A^TQ+QA-\eta Q\preceq0.$ Otherwise, if $d=Qx$ dose not satisfy
condition (\ref{iceq451}), we have
\begin{equation}\label{iceq455}
(Qx)^T(A^TQ+QA-\eta Q)(Qx)>0.
\end{equation}
Now assume $A^TQ+QA-\eta Q\npreceq0,$ then there exists an nonzero
vector $\tilde{x}\in \mathbb{R}^n$, such that $(A^TQ+QA-\eta
Q)\tilde{x}=\lambda \tilde{x},$ where $\lambda >0.$ By multiplying
appropriate scalar for $\tilde{x}$, we can have the following
orthogonal decomposition of $\tilde{x}$,
\begin{equation}
\tilde{x}=\tilde{d}+Qx, \text{ where } \tilde{d}^TQx=0.
\end{equation}
Then $\tilde{d}$ satisfies condition (\ref{iceq451}). Substituting
$\tilde{d}=\tilde{x}-Qx$ into the left formula of condition
(\ref{iceq451}), we have
\begin{equation}\label{iceq456}
\lambda \|\tilde{x}\|^2-2\lambda (Qx)^T\tilde{x}+(Qx)^T(A^TQ+QA-\eta
Q)(Qx).
\end{equation}
Note that $\|\tilde{x}\|^2=\|\tilde{d}\|^2+\|Qx\|^2+2
(Qx)^T\tilde{x}$, we have $\|\tilde{x}\|^2>2 (Qx)^T\tilde{x}.$ Also,
applying (\ref{iceq455}) to (\ref{iceq456}), we have that the
formula in (\ref{iceq456}) is
 positive. This is a contradiction. Therefore, in this case, we also have $A^TQ+QA-\eta
 Q\preceq0$.

Since $A^TQ+QA-\eta Q$ is singular, the last condition in
(\ref{iceq456}) can be replaced by $x\neq0.$ By left multiplying
$d^T$ to the third condition in  (\ref{iceq456}), we have
$d^T(A^TQ+QA)d=0$. in one can prove that (\ref{iceq456}) is
equivalent with that
\begin{equation}\label{iceq46}
\begin{aligned}
d^T(A^TQ+QA-\eta Q)d &\leq&0\\
 x^T(A^TQ+QA-\eta Q)d&=&0\\
(A^TQ+QA-\eta Q)x &=&0\\
\end{aligned}
\end{equation}
where $x\neq0.$

For a Lorenz cone $\mathcal{C_L},$ we consider the following
optimization model.
\begin{equation}\label{iceq51}
\begin{aligned}
\max~ &~ x^TQf(t_0,x)\\
\text{s.t. } &~ x^TQx=1,\\
& ~x^TQu_n\leq 0,
\end{aligned}
\end{equation}

We consider the linear dynamical system, i.e., $f(t_0,x)=Ax$. Then
the optimization problem can be formulated as
\begin{equation}\label{iceq52}
\begin{aligned}
\min~ & -\frac{1}{2} x^T(A^TQ+QA)x\\
\text{ s.t. } &x^TQx=1\\
& x^TQu_n\leq 0
\end{aligned}
\end{equation}
The Lagrangian of (\ref{iceq52}) is
\begin{equation}
L = -\frac{1}{2}x^T(QA+A^TQ)x+\frac{\eta}{2} (x^TQx-1) + \alpha
x^TQu_n.
\end{equation}
The KKT condition is
\begin{equation}\label{iceq53}
\begin{aligned}
(A^TQ+QA-\eta Q)x -\alpha x^TQu_n&=&0\\
 x^TQx&=&1\\
 x^TQu_n&\leq&0\\
 \alpha&\geq&0\\
\alpha x^TQu_n&=&0
\end{aligned}
\end{equation}
We can also prove that $\eta\leq0$ as the discussion of ellipsoid.

Now we consider the second order optimality condition, which can be
written as
\begin{equation}\label{iceq55555}
\begin{aligned}
d^T(A^TQ+QA-\eta Q)d &\leq&0\\
 d^TQx&=&0\\
(A^TQ+QA-\eta Q)x -\alpha Qu_n&=&0\\
 x^TQx&=&1\\
 x^TQu_n&\leq&0\\
 \alpha&\geq&0\\
\alpha x^TQu_n&=&0
\end{aligned}
\end{equation}
If $x^TQu_n<0$, which implies $\alpha=0,$ then this  yields a
similar condition as ellipsoid. Thus, $A^TQ+QA-\eta Q\preceq0.$ If
$x^TQu_n=0, $ a similar argument will be applied to derive the conclusion. 

\section{Conclusion}

Positively invariant set is an important concept in dynamical system and has widely used in application in control.  In this paper, we investigate Nagumo Theorem and apply it into specific convex sets, e.g., polyhedra, ellipsoids and cones. Then we derive the sufficient and necessary conditions of some classical convex sets
as positively invariant sets for a continuous dynamical system. The method is to derive the tangent cones of these sets. We derive some new positive invariance conditions or similar invariance conditions obtained by other researchers. To verify the invariance conditions, we propose methods using optimization theory and models. The introduction of using optimization techniques brings a novel insight on studying invariant set for a continuous dynamical system.

\bibliographystyle{plain}
\bibliography{myref}

\end{document}